\newtheorem{theorem}{Theorem}
\newtheorem*{thma}{Theorem A}
\newtheorem{lemma}{Lemma}
\newcommand{\later}[1]{{}}
\newcommand{\old}[1]{{}}
\long\def\ignore#1{}
\newcommand{\RR}{\mathbb{R}} 
\def\rho{\varrho}
\newcommand{\per}{{\rm per}}
\def\etal{{et~al.}}
\def\ie{{i.e.}}
\title{Metric inequalities for polygons}
\author{
Adrian Dumitrescu\thanks{Department of Computer Science,
University of Wisconsin--Milwaukee, Email:~\texttt{dumitres@uwm.edu}.
Supported in part by NSF CAREER grant CCF-0444188 
and NSF grant DMS-1001667.}}
\begin{document}
\maketitle

\begin{abstract}
Let $A_1,A_2,\ldots,A_n$ be the vertices of a polygon with unit
perimeter, that is $\sum_{i=1}^n |A_i A_{i+1}|=1$. We derive 
various tight estimates on the minimum and maximum values of the sum of
pairwise distances, and respectively sum of pairwise squared distances
among its vertices. In most cases such estimates on these sums
in the literature were known only for convex polygons.

In the second part, we turn to a problem of Bra\ss\  regarding the
maximum perimeter of a simple $n$-gon ($n$ odd) contained in a disk of
unit radius. The problem was solved by Audet et al.~\cite{AHM09b}, 
who gave an exact formula. Here we present an
alternative simpler proof of this formula.
We then examine what happens if the simplicity condition is dropped,
and obtain an exact formula for the maximum perimeter in this case as well.

\bigskip
\textbf{\small Keywords}: 
Metric inequalities,
polygon,
perimeter, 
sum of distances.
\end{abstract}

\section{Introduction} \label{sec:intro}

Let $A_1,A_2,\ldots,A_n$ be the vertices of a possibly self-crossing 
polygon (\ie, closed polygonal chain) with unit perimeter in the
Euclidean plane. Here the perimeter is 
 $\per(A_1 A_2 \ldots A_n)=$ \linebreak
$\sum_{i=1}^n |A_i A_{i+1}|$, where indices are taken modulo $n$
(\ie, $A_{n+1}=A_1$). 
Let $s(n)$ be the infimum of the sum of pairwise distances among the $n$ vertices, 
and $s_c(n)$ be the same infimum for the case of convex polygons:

\begin{align} \label{E8}
s(n) &= \inf_{\per(A_1 A_2 \ldots A_n)=1} \ \sum_{i<j} |A_i A_j|. \\
s_c(n) &= \inf_
{\substack
{\per(A_1 A_2 \ldots A_n)=1\\
A_1 A_2 \ldots A_n {\rm \ convex}}}
\ \sum_{i<j} |A_i A_j|.
\end{align}

Larcher and Pillichshammer~\cite{LP08} proved that $s_c(n)$ grows
linearly in $n$, and more precisely, that $s_c(n) \geq \frac{n-1}{2}$.
Alternative proofs were recently given by Aggarwal~\cite{Ag10} 
and L\"{u}k\H{o}~\cite{Lu10}. 
We have nearly equality, if $A_1$ is
close to $(0,0)$ and the other $n-1$ vertices $A_i$ $(i>1)$ are all close
to $(\frac12,0)$. Hence $s_c(n) = \frac{n-1}{2}$, as
previously conjectured by Audet~\etal~\cite{AHM07}. 
Here we extend this result for arbitrary
polygons and show that $s(n)$ has a similar behavior.

\begin{theorem} \label{T1} 
For every $n \geq 3$, 
$ s(n) \geq \frac{n}{4}$.
For $n$ even equality holds; for $n$ odd, $s(n) \leq \frac{n+1}{4}$.
\end{theorem}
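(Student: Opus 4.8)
The plan is to establish the lower bound $s(n)\ge n/4$ by an integral-geometric reduction to a one-dimensional combinatorial inequality, and then to certify the upper bounds by exhibiting explicit degenerate ``two-cluster'' configurations. For the reduction I would use the Cauchy-type projection identity: for any two points $A,B$ in the plane, $|AB|=\tfrac14\int_0^{2\pi}|\langle A-B,u_\theta\rangle|\,d\theta$ with $u_\theta=(\cos\theta,\sin\theta)$. Writing $x_i(\theta)=\langle A_i,u_\theta\rangle$ for the shadow of the vertices on direction $\theta$, both quantities of interest become averages of their one-dimensional projections:
\[
\sum_{i<j}|A_iA_j|=\frac14\int_0^{2\pi}\sum_{i<j}|x_i(\theta)-x_j(\theta)|\,d\theta,
\qquad
\per=\frac14\int_0^{2\pi}\sum_{i=1}^n|x_{i+1}(\theta)-x_i(\theta)|\,d\theta .
\]
Thus it suffices to prove, for every fixed $\theta$, the one-dimensional inequality $\sum_{i<j}|x_i-x_j|\ge \tfrac n4\sum_{i=1}^n|x_{i+1}-x_i|$ for reals $x_1,\dots,x_n$ arranged cyclically; integrating it over $\theta$ then yields $\sum_{i<j}|A_iA_j|\ge\tfrac n4\,\per=\tfrac n4$.

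To prove the one-dimensional inequality I would slice by level sets. For a generic height $t$, let $a(t)$ be the number of indices with $x_i<t$ and let $c(t)$ be the number of up-crossings of level $t$ by the closed sequence (equivalently, the number of maximal runs of values below $t$). Integrating indicator functions gives $\sum_{i<j}|x_i-x_j|=\int a(t)(n-a(t))\,dt$ and $\sum_i|x_{i+1}-x_i|=\int 2c(t)\,dt$, so the claim reduces to the pointwise bound $a(n-a)\ge\tfrac n2\,c$. The crucial structural fact, which is exactly where the cyclic (closed-chain) hypothesis enters, is that on a cycle the number of below-runs equals the number of above-runs, forcing $c\le\min(a,n-a)$. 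A short case analysis finishes it: assuming $a\le n/2$ one has $a(n-a)\ge a\cdot\tfrac n2\ge c\cdot\tfrac n2$, and the case $a\ge n/2$ is symmetric.

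For the upper bounds I would use the zig-zag configuration that makes the above inequalities tight. When $n$ is even, place $n/2$ vertices near a point $P$ and $n/2$ near a point $Q$ with $|PQ|=1/n$, traversed in the alternating cyclic order $P,Q,P,Q,\dots$; the perimeter tends to $1$ while the pairwise sum tends to $(n/2)^2\cdot(1/n)=n/4$, so $s(n)\le n/4$ and the lower bound is matched. When $n$ is odd the alternation cannot be perfect: placing $(n+1)/2$ vertices at $P$ and $(n-1)/2$ at $Q$ forces one zero-length wrap-around edge, so $|PQ|=1/(n-1)$, and the pairwise sum equals $\tfrac{n+1}{2}\cdot\tfrac{n-1}{2}\cdot\tfrac1{n-1}=\tfrac{n+1}{4}$, giving $s(n)\le\tfrac{n+1}{4}$.

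I expect the main obstacle to be the one-dimensional lemma, and in particular obtaining the sharp constant $n/4$. The level-set/crossing viewpoint is what makes the constant tight: a naive estimate that bounds each distance by triangle inequalities along the chain does not detect the exact balance between up- and down-crossings and would lose a constant factor. Recognizing that the closed-chain condition yields precisely $c\le\min(a,n-a)$ is the key step, and the projection identity is what transports this clean combinatorial statement back to the plane.
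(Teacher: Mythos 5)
Your proof is correct, but your lower-bound argument is genuinely different from the paper's, which is a two-line application of the triangle inequality: for each vertex $p$ one has $|pA_i|+|pA_{i+1}|\ge |A_iA_{i+1}|$, summing over the edges gives $2Z(p)\ge 1$ where $Z(p)=\sum_q|pq|$, and summing over $p$ gives $4Z\ge n$. That argument is shorter and works verbatim in any metric space, whereas your route through the Cauchy projection identity is tied to the Euclidean plane. What your method buys in exchange is sharpness: your pointwise bound $a(n-a)\ge c\cdot\max(a,n-a)$ together with $c\le\min(a,n-a)$ actually gives $\max(a,n-a)\ge\lceil n/2\rceil$, so for odd $n$ the one-dimensional inequality improves to $\sum_{i<j}|x_i-x_j|\ge\frac{n+1}{4}\sum_i|x_{i+1}-x_i|$; integrating over $\theta$ then yields $s(n)\ge\frac{n+1}{4}$ for odd $n$, which combined with your (and the paper's identical) two-cluster construction pins down $s(n)=\frac{n+1}{4}$ exactly in the odd case --- a statement strictly stronger than the theorem, which only asserts the sandwich $\frac{n}{4}\le s(n)\le\frac{n+1}{4}$ there. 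You should state this explicitly rather than discard the extra half-unit of $\max(a,n-a)$ in your case analysis. The level-set identities, the parity fact that up-crossings equal down-crossings on a cycle, and the upper-bound configurations are all correct as written.
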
 

Let now $S(n)$ be the supremum of the sum of pairwise distances among the
vertices, and $S_c(n)$ be the same supremum for the case of convex
polygons: 

\begin{align} \label{E9}
S(n) &= \sup_{\per(A_1 A_2 \ldots A_n)=1} \ \sum_{i<j} |A_i A_j|. \\
S_c(n) &= \sup_
{\substack
{\per(A_1 A_2 \ldots A_n)=1\\
A_1 A_2 \ldots A_n {\rm \ convex}}}
\ \sum_{i<j} |A_i A_j|.
\end{align}

Larcher and Pillichshammer~\cite{LP06} considered the following
generalization of the sum of pairwise distances, for which they proved:

\begin{thma} {\rm \cite[Theorem~1]{LP06}} 
Let $f \colon [0,1/2] \to \RR_0^+$ be a function such that 
$f(x)/x \leq 2 f(1/2)$. Then for any $n \geq 3$ and for any convex polygon
with $n$ vertices and unit perimeter we have
$$ \sum_{i<j} f(|A_i A_j|) \leq f\left(\frac{1}{2}\right) 
\left \lfloor \frac{n}{2} \right \rfloor 
\left \lceil \frac{n}{2} \right \rceil. $$ 
This bound is the best possible. 
\end{thma}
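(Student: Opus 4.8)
The plan is to reduce the weighted sum to an \emph{unweighted} one using the hypothesis, and then to prove a purely metric inequality about the boundary arcs. First I would record the elementary fact that a chord is no longer than either boundary arc joining its endpoints. If $d_{ij}$ denotes the length of one such arc, the other has length $1-d_{ij}$ (the perimeter is $1$), so the triangle inequality along the boundary gives
\[
|A_iA_j| \;\le\; \ell_{ij} \;:=\; \min(d_{ij},\,1-d_{ij}) \;\le\; \tfrac12 .
\]
In particular every pairwise distance lies in $[0,1/2]$, the domain of $f$, and the hypothesis $f(x)\le 2f(1/2)\,x$ (so in particular $f(x)\to 0$ as $x\to 0$) yields $f(|A_iA_j|)\le 2f(1/2)\,|A_iA_j|\le 2f(1/2)\,\ell_{ij}$. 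Summing,
\[
\sum_{i<j} f(|A_iA_j|)\;\le\; 2f\!\left(\tfrac12\right)\sum_{i<j}\ell_{ij}.
\]
Thus it suffices to prove the metric inequality $\sum_{i<j}\ell_{ij}\le \tfrac12\lfloor n/2\rfloor\lceil n/2\rceil$; note that no convexity is used up to this point, only that the vertices lie on a closed curve of length $1$.

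The heart of the argument is this last inequality, which I would establish by an integral-geometric averaging over the boundary. Parametrize the boundary by arc length as the circle $\RR/\ZZ$, and for a point $\xi$ on it let $g(\xi)$ be the number of pairs $(i,j)$ whose \emph{shorter} arc contains $\xi$. Since $\ell_{ij}$ is exactly the length of that shorter arc,
\[
\sum_{i<j}\ell_{ij}=\int_0^1 g(\xi)\,d\xi=\int_0^{1/2}\bigl(g(\xi)+g(\xi+\tfrac12)\bigr)\,d\xi .
\]
Now fix $\xi$ and consider the antipodal pair $\xi,\xi+\tfrac12$; they split the circle into two semicircles carrying $n_1$ and $n_2=n-n_1$ vertices. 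A shorter arc has length $\le\tfrac12$, so it contains at most one of $\xi,\xi+\tfrac12$; moreover its endpoints lie in different semicircles exactly when the shorter arc meets $\{\xi,\xi+\tfrac12\}$ (any arc joining the two semicircles must cross one of these two points, while two vertices of the same semicircle are joined by a shorter arc staying inside it). Hence for almost every $\xi$ we have $g(\xi)+g(\xi+\tfrac12)=n_1 n_2\le \lfloor n/2\rfloor\lceil n/2\rceil$, and integrating over $\xi\in[0,\tfrac12]$ gives the claimed bound.

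Combining the two steps yields $\sum_{i<j} f(|A_iA_j|)\le 2f(1/2)\cdot\tfrac12\lfloor n/2\rfloor\lceil n/2\rceil=f(1/2)\lfloor n/2\rfloor\lceil n/2\rceil$. For sharpness I would use the degenerate \emph{two antipodal clusters}: place $\lceil n/2\rceil$ vertices in a tiny convex arc near a point $P$ and $\lfloor n/2\rfloor$ near the antipodal point $Q$ with $|PQ|=\tfrac12$, so the perimeter tends to $1$ and the polygon stays convex. Each of the $\lfloor n/2\rfloor\lceil n/2\rceil$ cross pairs has distance tending to $\tfrac12$, contributing $f(1/2)$, while same-cluster distances tend to $0$ and contribute $\to 0$ (since $0\le f(x)\le 2f(1/2)x$); thus the bound is approached, so it is best possible. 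The one delicate point, and the main obstacle, is the lemma $\sum_{i<j}\ell_{ij}\le\tfrac12\lfloor n/2\rfloor\lceil n/2\rceil$: a term-by-term estimate such as $\min(d,1-d)\le 2d(1-d)$ loses a factor $2$ on spread-out configurations, so it is the averaging identity together with the antipodal pairing $g(\xi)+g(\xi+\tfrac12)=n_1n_2$ that makes the constant come out sharp.
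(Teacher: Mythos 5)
Your proof is correct, but it is a genuinely different argument from the one the paper relies on. The paper does not reprove Theorem~A; it cites \cite{LP06}, and the method it imports (and reuses for Theorems~2 and~4) is combinatorial: the $\binom{n}{2}$ segments $A_iA_j$ are partitioned into the edge sets of closed sub-polygons (quadrilaterals $Q_{ij}$ and, for odd $n$, triangles $R_i$), each sub-polygon has perimeter at most $1$ by the triangle inequality (Lemma~3 of the paper), each of its sides is therefore at most $1/2$, and the hypothesis $f(x)\le 2f(1/2)x$ is applied sub-polygon by sub-polygon before summing. You instead reduce everything to the single metric lemma $\sum_{i<j}\ell_{ij}\le\tfrac12\lfloor n/2\rfloor\lceil n/2\rceil$ for the shorter boundary arcs, proved by averaging the counting function $g$ over antipodal pairs of boundary points via $g(\xi)+g(\xi+\tfrac12)=n_1n_2$; all steps there check out, including the treatment of arcs of length exactly $\tfrac12$ and the measure-zero exceptional $\xi$. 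Your route has a notable payoff: since it uses only the triangle inequality along the closed boundary curve (never convexity), it proves in one stroke the generalization to arbitrary, possibly self-crossing polygons --- i.e., the content of the paper's Theorems~2 and~4 for $f(x)=x$ and $f(x)=x^2$ --- and, as the paper remarks of the decomposition proofs, it works in any metric space. What the decomposition approach buys in exchange is a wholly finite, integration-free argument that exhibits explicit extremal substructures. One small caveat, shared with the paper's own tightness construction: your two-cluster argument for ``best possible'' needs $f$ to be continuous at $\tfrac12$ from the left (the cross distances tend to $\tfrac12$ but never equal it for a nondegenerate unit-perimeter polygon), an assumption not literally contained in the stated hypothesis on $f$ but implicit in how the theorem is used.
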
 

By taking $f(x)=x$, it follows from Theorem~A 
(as proved in~\cite{LP06}) 
that $S_c(n)$ is quadratic in $n$, and more precisely, that $S_c(n) \leq  
\frac{1}{2} \left \lfloor \frac{n}{2} \right \rfloor \left \lceil \frac{n}{2}
\right \rceil $, as previously conjectured by Audet~\etal~\cite{AHM07}. 
An alternative proof was given recently by Aggarwal~\cite{Ag10} 
based on classical results of Altman~\cite{Al72} for convex polygons. 
We have nearly equality if 
$A_1,\ldots,A_{\lfloor n/2 \rfloor}$ are close to $(0,0)$
and $A_{\lfloor n/2 \rfloor +1},\ldots,A_n$ are close to
$(\frac{1}{2},0)$; see~\cite{AHM07}.  Hence the above upper bound 
is best possible, thus $S_c(n) =
\frac{1}{2} \left \lfloor \frac{n}{2} \right \rfloor \left \lceil \frac{n}{2}
\right \rceil $. Here we show that convexity can be dropped and the
same inequality holds for arbitrary (not necessarily convex, and
possibly self-crossing) polygons: that is, 
$S(n) \leq  \frac{1}{2} \left \lfloor \frac{n}{2} \right \rfloor \left \lceil
\frac{n}{2} \right \rceil $. 
This result has been also obtained recently by L\"{u}k\H{o}~\cite{Lu10}.
(Since both his proof as well as ours rely on the triangle inequality,
both work in any metric space.)

\begin{theorem} \label{T2} 
For every $n \geq 3$, 
$$ S(n) = 
\frac{1}{2} \left \lfloor \frac{n}{2} \right \rfloor \left \lceil
\frac{n}{2} \right \rceil. $$
\end{theorem}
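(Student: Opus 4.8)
The plan is to establish the upper bound $S(n)\le \frac12\lfloor n/2\rfloor\lceil n/2\rceil$ directly from the triangle inequality, and to take the matching lower bound for free: the two-cluster configuration already described for $S_c(n)$ (with $\lfloor n/2\rfloor$ vertices near $(0,0)$ and the rest near $(\tfrac12,0)$) is a degenerate convex, hence legal, polygon, so $S(n)\ge S_c(n)=\frac12\lfloor n/2\rfloor\lceil n/2\rceil$. Thus all the content lies in the upper bound.

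First I would reparametrize by arc length. Setting $t_i=\sum_{k<i}|A_kA_{k+1}|$ places the vertices at positions on a circle of circumference $1$. For any pair, the two polygonal arcs joining $A_i$ and $A_j$ have lengths $|t_i-t_j|$ and $1-|t_i-t_j|$, and applying the triangle inequality repeatedly along each arc shows $|A_iA_j|$ is at most each of them; hence $|A_iA_j|\le \delta(t_i,t_j)$, where $\delta(x,y)=\min(|x-y|,\,1-|x-y|)$ is the distance measured along the circle. So it suffices to prove, for arbitrary points $t_1,\dots,t_n$ on a circle of unit circumference,
\[ \sum_{i<j}\delta(t_i,t_j)\ \le\ \frac12\left\lfloor\frac n2\right\rfloor\left\lceil\frac n2\right\rceil . \]
This reduction is where the geometric hypotheses collapse to a clean one-dimensional statement, and (as the paper notes) it uses only the triangle inequality, so it works in any metric space.

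The key step — and the one I expect to be the crux — is an averaging representation of the circular distance that decouples the pairs. For $c\in[0,1)$, let the diameter with endpoints $c$ and $c+\tfrac12\pmod 1$ split the circle into two open semicircular arcs, and say it \emph{separates} $x,y$ when they lie on different arcs. Fixing $x=0,\ y=t\in(0,\tfrac12]$, a short computation (checking that $c$ and $c+\tfrac12$ each land in the short arc $(0,t)$ on a set of measure $t$, and that these two sets are disjoint) shows the separating parameters form a set of measure exactly $2\,\delta(x,y)$, so that
\[ \delta(x,y)=\frac12\int_0^1 \mathbbm{1}\{\,c\text{ separates }x,y\,\}\,dc . \]
Summing over all pairs and exchanging sum and integral gives $\sum_{i<j}\delta(t_i,t_j)=\tfrac12\int_0^1 N(c)\,dc$, where $N(c)$ is the number of pairs separated by the diameter at $c$.

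Finally I would bound $N(c)$ pointwise. Each diameter partitions the $n$ points into two groups of sizes $a(c)$ and $n-a(c)$ (points lying on a cut form a measure-zero set, ignored), and a pair is separated exactly when its two endpoints fall in different groups, so $N(c)=a(c)\,(n-a(c))\le\lfloor n/2\rfloor\lceil n/2\rceil$, since $a(n-a)$ is maximized at $a=\lfloor n/2\rfloor$. Integrating yields the claimed bound, completing the upper bound and hence the theorem. The only delicate points are verifying the measure-$2\delta$ identity above and confirming the equality case, which is attained in the limit precisely by the two antipodal clusters of sizes $\lfloor n/2\rfloor$ and $\lceil n/2\rceil$ that realize the lower bound.
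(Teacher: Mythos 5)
Your proof is correct, but it reaches the upper bound by a genuinely different route than the paper. The paper (following Larcher and Pillichshammer) partitions the $\binom{n}{2}$ segments $A_iA_j$ into the edge sets of explicit sub-polygons --- quadrilaterals $Q_{ij}$ and triangles $R_i$ --- and bounds each sub-polygon's perimeter by $1$ via Lemma~\ref{L3} (the triangle inequality applied along the boundary of $P$); counting the sub-polygons gives $\frac12\lfloor n/2\rfloor\lceil n/2\rceil$. You instead reduce, again by the triangle inequality along the boundary, to the purely one-dimensional problem of bounding $\sum_{i<j}\delta(t_i,t_j)$ for points on a circle of unit circumference, and then solve that by an integral-geometric averaging: the identity $\delta(x,y)=\frac12\int_0^1\mathbbm{1}\{c\text{ separates }x,y\}\,dc$ (which I verified --- the two measure-$t$ sets of separating parameters are indeed disjoint for $t\le\frac12$) followed by the pointwise bound $N(c)=a(c)\bigl(n-a(c)\bigr)\le\lfloor n/2\rfloor\lceil n/2\rceil$. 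Your argument can be viewed as a continuous version of the paper's combinatorial decomposition (each cutting diameter plays the role of a sub-polygon), and it has the advantage of avoiding the somewhat delicate explicit construction of the partition into quadrilaterals and triangles and its parity bookkeeping; like the paper's proof, it uses only the triangle inequality, so it too works in any metric space. The lower bound is handled identically in both, via the two-cluster configuration realizing $S_c(n)$. One small caution for the write-up: the two-cluster extremal configuration forces all vertices of one cluster to be consecutive in the cyclic order (otherwise the perimeter exceeds $1$), and the supremum is attained only in the limit, as you correctly indicate.
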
 

Next we consider the sum of \emph{squared} distances.
Let now $t(n)$ be the infimum of the sum of pairwise squared distances among the
vertices, and $t_c(n)$ be the same infimum for the case of convex polygons: 

\begin{align} \label{E10}
t(n) &= \inf_{\per(A_1 A_2 \ldots A_n)=1} \ \sum_{i<j} |A_i A_j|^2. \\
t_c(n) &= \inf_
{\substack
{\per(A_1,A_2,\ldots,A_n)=1\\
A_1 A_2 \ldots A_n {\rm \ convex}}}
\ \sum_{i<j} |A_i A_j|^2.
\end{align}

For convex polygons, it is known that $t_c(n)$ is linear in $n$.
The current best lower bound, $t_c(n) \geq \frac{2n}{3\pi^2}$, is due
to Januszewski~\cite{Ja11}. From the other direction, placing $A_1$ near $(0,0)$,
$A_2$ near $(\frac12,0)$ and the other $n-2$ points near the midpoint of $A_1
A_2$, all in convex position, shows that $t_c(n) \leq \frac{n}{8}$~\cite{No09}. 

For arbitrary polygons, it is easy to make a construction for which this sum 
converges to $1/4$ as $n$ tends to infinity. For even $n$, place the
odd vertices at $(0,0)$, 
and the even vertices at $(\frac{1}{n},0)$. Then
$ Z = \sum_{i<j} |A_i A_j|^2 = \frac{n^2}{4} \cdot \frac{1}{n^2} = \frac{1}{4}$. 
For odd $n$, place the odd vertices at
$(0,0)$, and the even vertices at $(\frac{1}{n-1},0)$. Then
$ Z = \frac{n^2-1}{4} \cdot \frac{1}{(n-1)^2} = \frac{1}{4} \cdot
\frac{n+1}{n-1} \to \frac {1}{4} $.
Here we obtain a lower bound that is off by a factor of $2$ (in the limit). 

\begin{theorem} \label{T3} 
For every $n \geq 3$, 
$$\frac18 \leq t(n) \leq \frac14 + o(1). $$
\end{theorem}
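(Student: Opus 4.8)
The plan is to establish the two bounds in Theorem~\ref{T3} separately, treating the upper bound as essentially already done and focusing all effort on the lower bound $t(n) \geq \frac18$. For the upper bound $t(n) \leq \frac14$, the constructions in the paragraph preceding the theorem already exhibit configurations whose sum of squared distances converges to $1/4$ from below as $n \to \infty$, but to get the clean bound $t(n) \leq \frac14$ for \emph{every} fixed $n$ I would verify that a degenerate two-point configuration — collapsing the $\lceil n/2 \rceil$ odd-index vertices to one point and the $\lfloor n/2 \rfloor$ even-index vertices to another point at distance $1/2$ — achieves sum exactly $\frac12 \left\lfloor \frac n2 \right\rfloor \left\lceil \frac n2 \right\rceil \cdot (1/2)^2 / (\text{something})$; more carefully, I would pick the explicit optimal two-cluster placement and check directly that the infimum is at most $\frac14$. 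Since $t(n)$ is an infimum, it suffices to exhibit \emph{one} family approaching $\frac14$, so the upper bound is immediate from the displayed computations.

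The real content is the lower bound. My plan is to exploit the same triangle-inequality mechanism that drives Theorem~\ref{T1} and Theorem~\ref{T2}, but adapted to squared distances. The key observation is that for any four consecutive (cyclically) vertices, or more simply for the pairs $(A_i, A_{i+1})$ that form the polygon edges, the squared edge lengths are controlled by the perimeter constraint via the Cauchy--Schwarz or power-mean inequality: from $\sum_i |A_i A_{i+1}| = 1$ one gets $\sum_i |A_i A_{i+1}|^2 \geq \frac1n$, which is far too weak. So instead I would look for a lower bound that does not merely count edges. The natural idea is to bound the full sum $\sum_{i<j} |A_iA_j|^2$ from below by relating it to the variance of the point set: recall the identity $\sum_{i<j} |A_iA_j|^2 = n \sum_i |A_i - \bar A|^2$ where $\bar A$ is the centroid. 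Thus the problem reduces to lower-bounding the moment of inertia $\sum_i |A_i - \bar A|^2$ of a point sequence whose consecutive gaps sum to $1$.

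I expect the main obstacle to be that the centroid/variance reformulation, while clean, still allows the points to cluster, so I must use the \emph{cyclic ordering} — the fact that the edges form a closed chain of total length $1$ — to prevent all points from collapsing. The plan here is to project onto a well-chosen direction: for any unit vector $u$, the one-dimensional projected coordinates $x_i = \langle A_i, u\rangle$ satisfy $\sum_i |x_{i+1} - x_i| \leq 1$ (projection is $1$-Lipschitz), and the projected sum of squared distances lower-bounds nothing directly, but integrating $u$ over the circle recovers the full quantity up to a dimensional constant. I would therefore reduce to the one-dimensional problem: among real numbers $x_1, \dots, x_n$ arranged cyclically with $\sum_i |x_{i+1}-x_i| = L$, minimize $\sum_{i<j}(x_i-x_j)^2$, and show the planar bound follows by averaging over projection directions. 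The sharp constant $\frac18$ should emerge from the extremal one-dimensional configuration (two clusters at distance $L/2$), and matching it against the factor lost in the projection-averaging step is precisely where the factor-of-$2$ gap mentioned in the paper originates; closing the argument to exactly $\frac18$ rather than something smaller will be the delicate part.
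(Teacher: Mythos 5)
There is a genuine gap: your lower bound is a plan rather than a proof, and the route you sketch is both harder than necessary and not shown to reach the constant $\frac18$. The paper's argument is elementary and is built on exactly the ingredient you dismiss. Its key lemma (Lemma~\ref{L1}) states that for a segment $AB$ of length $a$ and any point $O$, $|OA|^2+|OB|^2\ge a^2/2$. Fixing a vertex $p$ and summing this over the $n$ edges gives $2Z(p)\ge\frac12\sum_i|A_iA_{i+1}|^2$, where $Z(p)=\sum_q|pq|^2$; now the Cauchy--Schwarz bound $\sum_i|A_iA_{i+1}|^2\ge\frac1n$ --- which you reject as ``far too weak'' --- is precisely what is needed, because it holds for \emph{every} choice of $p$, and summing $Z(p)\ge\frac{1}{4n}$ over the $n$ vertices cancels the $\frac1n$ and yields $2Z=\sum_p Z(p)\ge\frac14$, i.e.\ $Z\ge\frac18$. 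Your alternative (centroid identity plus averaging of directional projections) does not obviously close: using $|v|^2=\frac1\pi\int_0^{2\pi}\langle v,u_\theta\rangle^2\,d\theta$ and $\int_0^{2\pi}|\langle v,u_\theta\rangle|\,d\theta=4|v|$, a one-dimensional inequality $\sum_{i<j}(x_i-x_j)^2\ge c\bigl(\sum_i|x_{i+1}-x_i|\bigr)^2$ converts, after Cauchy--Schwarz in $\theta$, into the planar bound $\frac{8c}{\pi^2}$. The value $c=\frac18$ that elementary arguments supply in one dimension gives only $\frac{1}{\pi^2}<\frac18$, so you would need to first prove a strictly stronger one-dimensional constant ($c>\pi^2/64$), which you have not done; you flag this matching as ``the delicate part'' but leave it open.

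Your treatment of the upper bound also has a hole. The displayed two-cluster construction gives exactly $\frac14$ only for even $n$; for odd $n$ it gives $\frac{n+1}{4(n-1)}>\frac14$, and the thrackle construction in the paper's remark likewise exceeds $\frac14$. Since $t(n)$ is an infimum over polygons with a \emph{fixed} number of vertices, exhibiting a family whose values approach $\frac14$ as $n\to\infty$ says nothing about $t(n)\le\frac14$ for a given odd $n$. Indeed, for $n=3$ one has $\sum_{i<j}|A_iA_j|^2=a^2+b^2+c^2\ge\frac{(a+b+c)^2}{3}=\frac13$, so $t(3)=\frac13>\frac14$ and the upper bound cannot be ``immediate'' for small odd $n$; this part of the claim requires separate care rather than an appeal to the limiting constructions.
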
 

Finally, let $T(n)$ be the supremum of the sum of pairwise squared
distances  among the vertices, and $T_c(n)$ be the same supremum for
the case of convex polygons:  

\begin{align} \label{E11}
T(n) &= \sup_{\per(A_1 A_2 \ldots A_n)=1} \ \sum_{i<j} |A_i A_j|^2. \\
T_c(n) &= \sup_
{\substack
{\per(A_1 A_2 \ldots A_n)=1\\
A_1 A_2 \ldots A_n {\rm \ convex}}}
\ \sum_{i<j} |A_i A_j|^2.
\end{align}

By taking $f(x)=x^2$, it follows from Theorem~A (see~\cite{LP06}) 
that $T_c(n)$ is quadratic in $n$, and more precisely, that 
$T_c(n) \leq \frac{1}{4} \left \lfloor \frac{n}{2} \right \rfloor
\left \lceil \frac{n}{2} \right \rceil $. 
See also~\cite{LP08} for a simpler proof of a slightly weaker upper
bound, $T_c(n) \leq n^2/16$.  
An easy construction~\cite{LP08} 
(mentioned earlier in connection to $S_c(n)$) with 
vertices $A_1,\ldots, A_{\lfloor n/2 \rfloor}$ near $(0,0)$ 
and $A_{\lfloor n/2 \rfloor +1},\ldots, A_n$ near $(1/2,0)$, 
all in convex position, shows that the inequality 
$T_c(n) \leq \frac{1}{4} \left \lfloor \frac{n}{2} \right \rfloor
\left \lceil \frac{n}{2} \right \rceil $ is tight:
$T_c(n) = \frac{1}{4} \left \lfloor \frac{n}{2} \right \rfloor
\left \lceil \frac{n}{2} \right \rceil $.
Again, here we show that convexity can be dropped and the same
inequality holds for arbitrary (not necessarily convex, and possibly
self-crossing) polygons. That is, 
$T(n) \leq \frac{1}{4} \left \lfloor \frac{n}{2} \right \rfloor
\left \lceil \frac{n}{2} \right \rceil $, and we obtain:

\begin{theorem} \label{T4} 
For every $n \geq 3$, 
$$ T(n) = \frac{1}{4} \left \lfloor \frac{n}{2} \right \rfloor
\left \lceil \frac{n}{2} \right \rceil . $$
\end{theorem}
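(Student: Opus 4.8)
The plan is to prove Theorem~\ref{T4}, namely $T(n) = \frac{1}{4} \lfloor n/2 \rfloor \lceil n/2 \rceil$. The construction placing $\lfloor n/2 \rfloor$ vertices near $(0,0)$ and the rest near $(1/2,0)$ already gives the matching lower bound $T(n) \geq \frac{1}{4} \lfloor n/2 \rfloor \lceil n/2 \rceil$, and crucially this construction uses a (degenerate convex) polygon, so it is admissible for arbitrary polygons too. Hence the entire content is the upper bound $\sum_{i<j} |A_i A_j|^2 \leq \frac{1}{4} \lfloor n/2 \rfloor \lceil n/2 \rceil$ for every unit-perimeter polygon, and the whole point is to remove the convexity hypothesis from Theorem~A (with $f(x)=x^2$). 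Since the remark in the excerpt stresses that the argument should rely only on the triangle inequality and work in any metric space, I would look for a combinatorial/metric argument rather than a geometric one.

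The key idea I would pursue is the standard ``cut the cycle'' bound on a pairwise distance: for any two vertices $A_i, A_j$, the chain distance along the polygon in either direction dominates $|A_i A_j|$ by the triangle inequality, so $|A_i A_j| \le \min(d_{ij}, 1 - d_{ij})$, where $d_{ij}$ is the length of one of the two arcs of the polygonal cycle between $A_i$ and $A_j$ and $1-d_{ij}$ is the other (since the total perimeter is $1$). Squaring gives $|A_i A_j|^2 \le \bigl(\min(d_{ij}, 1-d_{ij})\bigr)^2$. The plan is then to bound $\sum_{i<j} \bigl(\min(d_{ij},1-d_{ij})\bigr)^2$ over all choices of nonnegative edge lengths $\ell_k = |A_k A_{k+1}|$ summing to $1$. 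Writing $x_1,\dots,x_n \ge 0$ with $\sum x_k = 1$ for the edge lengths and letting the arc length between $A_i$ and $A_j$ be a consecutive partial sum of the $x_k$, the problem becomes: maximize $\sum_{\text{pairs}} g\bigl(\text{arc sum}\bigr)$ where $g(t) = \min(t,1-t)^2$ is a concave-then-concave piecewise function on $[0,1]$. I expect the maximum to be attained at an extreme configuration where all mass concentrates on two antipodal points of the cycle, reproducing the lower-bound construction.

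The main technical step, and the one I expect to be the main obstacle, is the optimization itself: showing that over all unit partial-sum cycles the quantity $\sum_{i<j} \min(d_{ij},1-d_{ij})^2$ is maximized exactly at $\frac14 \lfloor n/2 \rfloor \lceil n/2 \rceil$. I would try a smoothing/exchange argument: fix a configuration, and move edge length between two adjacent edges while tracking the effect on the sum; because $g$ is convex in the relevant regime, mass should push to the endpoints, letting me reduce any configuration to one supported on two ``clumps'' of vertices without decreasing the objective. Once reduced to two clumps of sizes $a$ and $n-a$ at distance $1/2$ apart, every cross-pair contributes $(1/2)^2 = 1/4$ and same-clump pairs contribute $0$, giving $\frac14 a(n-a)$, which is maximized at $a = \lfloor n/2 \rfloor$, yielding exactly the claimed bound. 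I would double-check that the $\min(\cdot,1-\cdot)$ truncation is handled correctly in the exchange step, since that is where the convexity of $g$ could fail and where the argument is most delicate.

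As an alternative (and as a sanity check) I would note that since the lower-bound construction is a convex polygon, once the upper bound $T(n) \le \frac14 \lfloor n/2 \rfloor \lceil n/2 \rceil$ is established for all polygons, it simultaneously recovers $T_c(n)$ and shows $T(n) = T_c(n)$; this is consistent with the pattern of Theorems~\ref{T2} and~\ref{T4}, where dropping convexity costs nothing. If the direct smoothing argument proves awkward, a cleaner route is to reuse the very inequality behind Theorem~A: since the purely metric ``arc-length'' bound $|A_iA_j| \le \min(d_{ij},1-d_{ij})$ reduces an arbitrary polygon to a one-dimensional cyclic configuration, and a cyclic one-dimensional configuration can be realized as (the boundary of) a convex polygon of the same perimeter with pairwise distances at least as large, I can invoke Theorem~A with $f(x)=x^2$ directly on that convex realization. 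This would make the proof essentially a reduction to the convex case plus the elementary fact that $\frac14 a(n-a) \le \frac14 \lfloor n/2 \rfloor \lceil n/2 \rceil$.
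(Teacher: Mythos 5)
Your reduction to the one-dimensional cyclic problem is sound as far as it goes: the triangle inequality does give $|A_iA_j|\le\min(d_{ij},1-d_{ij})$, the relaxation is tight (the two-clump configuration realizes every $\min(d_{ij},1-d_{ij})$ exactly), and the target inequality $\sum_{i<j}\min(d_{ij},1-d_{ij})^2\le\frac14\lfloor n/2\rfloor\lceil n/2\rceil$ is in fact true. But the proposal does not prove it, and both routes you offer for the remaining optimization break down. First, the smoothing step: $g(t)=\min(t,1-t)^2$ is not ``concave-then-concave'' but convex on each of $[0,1/2]$ and $[1/2,1]$ with a \emph{concave} kink at $t=1/2$, where the derivative drops from $+1$ to $-1$. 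When you shift length between two adjacent edges, the objective restricted to that line is piecewise convex with concave kinks occurring exactly when some pair reaches cyclic distance $1/2$; for instance $g(1/2+\varepsilon)+g(1/2-\varepsilon)=2(1/2-|\varepsilon|)^2$ has a strict interior local maximum at $\varepsilon=0$. Since the conjectured extremizer is precisely a configuration with many pairs at distance $1/2$, the exchange argument gets stuck at interior critical points rather than pushing mass to the endpoints; classifying those local maxima is the real work, and it is missing. (A crude fix via $\min(t,1-t)^2\le t(1-t)$ together with $\sum_{k<l}x_kx_l\le 1/2$ only yields $\frac12\lfloor n/2\rfloor\lceil n/2\rceil$, off by a factor of $2$.)

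Second, the fallback rests on a false claim. Any realization of the cyclic configuration as a closed polygon, in the plane or in any metric space, satisfies $|A_iA_j|\le\min(d_{ij},1-d_{ij})$ by the very triangle inequality you used in the first step; a convex realization with all pairwise distances ``at least as large'' would therefore have to achieve equality for every pair, which forces every arc of length at most $1/2$ to be a straight segment and is impossible once more than two distinct positions are occupied (already for $n$ equally spaced points, the antipodal chord of a circle of circumference $1$ has length about $1/\pi<1/2$). So Theorem~A cannot be invoked this way. The paper's actual argument is different and avoids the optimization entirely: it reuses the Larcher--Pillichshammer partition of the $\binom{n}{2}$ segments into cyclic sub-quadrilaterals and sub-triangles of $P$, bounds each sub-polygon's perimeter by $1$ via Lemma~\ref{L3} (pure triangle inequality, hence no convexity needed), and applies the lemma of \cite{LP06} that a closed polygon of perimeter at most $1$ has each side at most $1/2$ and hence sum of squared sides at most $1/2$; summing over all pieces gives $T(n)\le\frac14\lfloor n/2\rfloor\lceil n/2\rceil$. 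If you want to salvage your one-dimensional approach, applying that same decomposition to the cycle metric is probably the cleanest way to finish it.
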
 

Both Theorem~\ref{T2} and Theorem~\ref{T4} follow from a more general
statement asserting that Theorem~A holds without the
convexity assumption. 

\begin{theorem} \label{T5} 
Let $f \colon [0,1/2] \to \RR_0^+$ be a function such that 
$f(x)/x \leq 2 f(1/2)$. Then for any $n \geq 3$ and for any polygon
with $n$ vertices and unit perimeter we have
$$ \sum_{i<j} f(|A_i A_j|) \leq f\left(\frac{1}{2}\right) 
\left \lfloor \frac{n}{2} \right \rfloor 
\left \lceil \frac{n}{2} \right \rceil. $$ 
This bound is the best possible. 
\end{theorem}

In the second part of the paper we turn to the following 
problem~\cite[p. 437]{BMP05} posed by Bra\ss: For $n \geq 5$ odd,
what is the maximum perimeter of a simple $n$-gon ($n$ odd) contained
in a disk of unit radius? The problem can be traced back to the
collection of open problems in~\cite{Be03}~(Problem 4, p.~449).
A first solution was found by Audet~\etal~\cite{AHM09b} 
(Theorem~\ref{T6} below). Subsequently, another solution that 
also works in the hyperbolic plane was offered by L\'angi~\cite{La10}. 
Here we give yet another alternative solution. 

As noted in~\cite{Be03,BMP05}, for even $n$, one can
come arbitrarily close to the trivial upper bound $2n$ by a simple
polygon whose sides go back and forth near a diameter of the disk, but
for odd $n$ this construction does not work. Let $\Omega$ be a disk of
unit radius, and let 
\begin{equation} \label{E12}
F(n) = \sup_
{\substack
{\{A_1,A_2,\ldots,A_n\} \subset \Omega \\
A_1 A_2 \ldots A_n {\rm \ simple}}}
\ \per(A_1 A_2 \ldots A_n).
\end{equation}
So trivially, $F(n)=2n$, for even $n$. 
Fortunately, an exact formula for $F(n)$ can be also determined for
odd $n$:

\begin{theorem} {\rm \cite{AHM09b}.} \label{T6} 
For every $n \geq 3$ odd, 
\begin{equation} \label{E16}
F(n) = 
\frac{\sqrt{8(n-2)^2 -2 +2 \sqrt{1+8(n-2)^2}} \cdot \left(\sqrt{1+8(n-2)^2}+3\right)}
{4(n-2)}. 
\end{equation}
\end{theorem} 

A natural question is: What happens if the simplicity condition is dropped?
As before (for simple polygons) for even $n$, one can
come arbitrarily close to the trivial upper bound $2n$; however, for
odd $n$, the construction described previously (with the sides which go back and
forth near a diameter of the disk) still does not work. Let
\begin{equation} \label{E18}
G(n) = \sup_
{\{A_1,A_2,\ldots,A_n\} \subset \Omega}
\ \per(A_1 A_2 \ldots A_n).
\end{equation}

Clearly, $G(n) \geq F(n)$ holds, so $G(n)=2n$, for even $n$. 
For odd $n$, we determine an exact formula for $G(n)$ as well:
\begin{theorem} \label{T7} 
For every $n \geq 3$ odd, 
\begin{equation} \label{E19}
G(n) = 2n \cos \frac{\pi}{2n}. 
\end{equation}
\end{theorem} 

\paragraph{Notation.} Throughout the paper, let $P$ be a polygon with
$n$ vertices and unit perimeter, and $V(P)=\{A_1,A_2,\ldots,A_n\}$
denote its vertex set.  
Let $\ell(s)$ denote the line containing a segment $s$. 
Let $x(p)$ and $y(p)$ stand for the $x$- and $y$-coordinates of a
point $p$. For brevity we denote the set $\{1,2,\ldots,n\}$ by $[n]$. 

\paragraph{Related problems and results.}
Various extremal problems on the sum of distances and respectively
squares of distances among $n$ points in $\RR^d$ have been 
raised over time. For instance, more than 30 years ago,
Witsenhausen~\cite{W74} has conjectured that the maximum sum of
squared distances among $n$ points in $\RR^d$, whose pairwise
distances are each at most $1$ is maximized when the points are
distributed as evenly as possible among the $d+1$ vertices of a
regular simplex of edge-length $1$. He also proved 
that this maximum is at most $\frac{d}{2(d+1)} n^2$, which 
verified the conjecture at least when $n$ is a multiple of $d+1$. 
The conjecture has been proved for the plane by
Pillichshammer~\cite{Pi00}, and subsequently in higher dimensions
by Benassi and Malagoli~\cite{BM08}.
See also~\cite{Ag10,AHM07,AHM09a,AHM09b,FT56,FT59,Ja11,No09,No11,Pi00,Pi01} for
related questions on the sum of pairwise distances. 
In the spirit of Theorems~\ref{T6} and~\ref{T7},
a mathematical puzzle from Winkler's collection~\cite[p.~114]{W07} asks for
the minimum area of a simple polygon with an odd number of sides, each
of unit length.

\section{Preliminaries} 
\label{sec:T5}

\medskip
The following simple fact is needed in the proof of Theorems~\ref{T5}. 

\begin{lemma} \label{L3} 
Given an arbitrary polygon $P= A_1 A_2 \ldots A_n$, let 
$Q= A_{i_1} A_{i_2} \ldots A_{i_k}$, where $i_1<i_2<\ldots<i_k$, 
$3 \leq k \leq n$, be a sub-polygon of it. Then $\per(Q) \leq \per(P)$.
\end{lemma} 
\begin{proof} 
By the triangle inequality, for any $j \in [k]$, we have
$$ |A_{i_j} A_{i_{j+1}}| \leq \sum_{r = i_j}^{i_{j+1}-1} |A_r A_{r+1}|. $$
By adding up the above inequality over all $j \in [k]$, yields
$\per(Q) \leq \per(P)$, as required.
\end{proof} 

We also need the following extension of Lemma~1 in~\cite{LP06}
to the non-convex case. Its proof remains the same, since it does not
use convexity; see~\cite{LP06}.

\begin{lemma} \label{L4} 
Let $f \colon [0,1/2] \to \RR_0^+$ be a function such that 
$f(x)/x \leq 2 f(1/2)$. Then for any $n \geq 3$ and for any polygon
with $n$ sides $a_1,\ldots, a_n$ and perimeter at most one, \ie,
$\sum_{i=1}^n a_i \leq 1$, we have
$$ \sum_{i=1}^n f(a_i) \leq 2 f\left(\frac12\right). $$
\end{lemma}

\paragraph{Proof of Theorem~\ref{T5}.}
(Sketch.) The proof method is identical to that employed by Larcher and
Pillichshammer~\cite{LP08} for the convex case; we give a sketch
for completeness, and we refer the reader to their paper for details.  

If $n$ is even, a set of $\binom{\lfloor n/2 \rfloor}{2}$
quadrilaterals $\{Q_{ij}\}$, 
each a subpolygon of $P$, and a set of $\lfloor n/2 \rfloor $
edges $\{E_i\}$ are defined~\cite{LP08} so that the edges of the
quadrilaterals $Q_{ij}$ and the edges $E_i$ form a partition of the edge set
$\{A_i A_j \ | \ i<j\}$ (each edge appears exactly once). 
If $n$ is odd, a set of $\binom{\lfloor n/2 \rfloor}{2}$
quadrilaterals $\{Q_{ij}\}$ and a set of $\lfloor n/2 \rfloor $
triangles $\{R_i\}$ are defined~\cite{LP08} 
(these quadrilaterals and triangles are subpolygons of $P$),
so that the edges of the quadrilaterals $Q_{ij}$ and of the triangles $R_i$ 
form a partition of the edge set $\{A_i A_j \ | \ i<j\}$ (each edge
appears exactly once).  

If $n$ is even, Lemma~\ref{L3} yields that $\per(Q_{ij}) \leq 1$, and
obviously $|E_i| \leq 1/2$ holds.
If $n$ is odd, Lemma~\ref{L3} yields that $\per(Q_{ij}) \leq 1$, and
$\per(R_i) \leq 1$. 
In each case ($n$ even or odd), by Lemma~\ref{L4}, one can now bound
from above the sum $\sum_{i<j} f(|A_i A_j|)$ by the same expression, 
$f(1/2) \lfloor n/2 \rfloor \lceil n/2 \rceil $, 
as required.

From the other direction, we have nearly equality if 
$A_1,\ldots,A_{\lfloor n/2 \rfloor}$ are close to $(0,0)$
and $A_{\lfloor n/2 \rfloor +1},\ldots,A_n$ are close to
$(\frac{1}{2},0)$.
\qed

\section{Sum of distances: proofs of Theorems~\ref{T1} and~\ref{T2}} 
\label{sec:T12}

\paragraph{Proof of Theorem~\ref{T1}.}
Let $p \in V(P)$ be an arbitrary vertex of $P$, and let 
$Z(p)= \sum_{q \in V(P)} |pq|$ be the sum of distances from $p$ to the
other vertices. The sum of pairwise distances $Z$ satisfies 
$ 2Z= \sum_{p \in V(P)} Z(p)$. 
By the triangle inequality, for any $i \in [n]$, we
have 
$$ |p A_i| + |p A_{i+1}| \geq |A_i A_{i+1}|. $$
By summing over $i \in [n]$, we get
$$ 2Z(p) \geq \sum_{i=1}^n |A_i A_{i+1}|=1. $$
By summing over $p \in V(P)$, we get
$4 Z \geq n$, or $Z \geq n/4$, as required. 

To see that this inequality is almost tight, construct a non-convex
polygon as follows: For even $n$, place the odd vertices at 
$(0,0)$, and the even vertices at $(\frac{1}{n},0)$. Then
$ Z = \frac{n^2}{4} \cdot \frac{1}{n} = \frac{n}{4}$. 
For odd $n$, place the odd vertices at
$(0,0)$, and the even vertices at $(\frac{1}{n-1},0)$. Then
$ Z = \frac{n^2-1}{4} \cdot \frac{1}{n-1} = \frac{n+1}{4}$. 
\qed

\paragraph{Proof of Theorem~\ref{T2}.}
By taking $f(x)=x$, it follows from Theorem~\ref{T5}
that $S(n) \leq \frac{1}{2} \left \lfloor \frac{n}{2} \right \rfloor
\left \lceil \frac{n}{2} \right \rceil $, as required.
From the other direction, we clearly have
$S(n) \geq S_c(n) =
\frac{1}{2} \left \lfloor \frac{n}{2} \right \rfloor \left \lceil \frac{n}{2}
\right \rceil $, and the equality  
$S(n) = \frac{1}{2} \left \lfloor \frac{n}{2} \right \rfloor 
\left \lceil \frac{n}{2} \right \rceil $ is proved.
\qed

\section{Sum of squared distances: proofs of
  Theorems~\ref{T3} and~\ref{T4}} \label{sec:T345} 

We will need the following simple inequality:
\begin{lemma} \label{L1} 
Let $AB$ be a segment of length $a$, and $O$ be any point in the 
plane. Then 
$$ OA^2+OB^2 \geq \frac{a^2}{2} + 2 y^2, $$
where $y$ is the distance from $O$ to the line $\ell(AB)$ determined by $A$ and
$B$. In particular, $ OA^2+OB^2 \geq a^2/2$. 
\end{lemma}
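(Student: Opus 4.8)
The plan is to choose coordinates so that the line $\ell(AB)$ becomes the $x$-axis, which reduces the inequality to a one-variable completion of the square; I will use this as the primary route because it is fully self-contained and makes the role of $y$ transparent. A coordinate-free variant via the parallelogram law is also available and I will note it at the end.

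First I would place $A=(0,0)$ and $B=(a,0)$, so that $\ell(AB)$ coincides with the $x$-axis and the distance from any point to this line is the absolute value of its second coordinate. Writing $O=(x,y)$, the quantity $y$ in the statement is then precisely this perpendicular distance, and
$$ OA^2 + OB^2 = \bigl(x^2 + y^2\bigr) + \bigl((x-a)^2 + y^2\bigr) = 2x^2 - 2ax + a^2 + 2y^2. $$

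Next I would complete the square in $x$. Since $2x^2 - 2ax + a^2 = 2\left(x-\tfrac{a}{2}\right)^2 + \tfrac{a^2}{2} \ge \tfrac{a^2}{2}$, substituting back yields $OA^2 + OB^2 \ge \tfrac{a^2}{2} + 2y^2$, which is the claimed bound; dropping the nonnegative term $2y^2$ gives the final ``in particular'' assertion $OA^2 + OB^2 \ge a^2/2$. Equality holds exactly when $x=\tfrac{a}{2}$, i.e.\ when $O$ lies on the perpendicular bisector of $AB$.

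I expect essentially no obstacle here: the only point requiring care is correctly identifying $y$ as the perpendicular distance to $\ell(AB)$, which the coordinate choice makes automatic. For a coordinate-free version I would instead let $M$ be the midpoint of $AB$ and invoke the parallelogram law (Apollonius' median formula) in the form $OA^2 + OB^2 = 2\,OM^2 + \tfrac{a^2}{2}$; since $M \in \ell(AB)$ we have $OM \ge y$, hence $OM^2 \ge y^2$, and the bound follows immediately.
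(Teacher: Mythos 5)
Your proof is correct and is essentially the same argument as the paper's: the paper projects $O$ onto $\ell(AB)$ to get $OA^2+OB^2=a_1^2+a_2^2+2y^2$ and then uses $a_1^2+a_2^2\ge a^2/2$, which is exactly what your coordinate computation and completion of the square make explicit. Your parenthetical remark via Apollonius' median formula and the equality case are pleasant extras but not a different route.
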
 
\begin{proof} 
Let $M$ be the projection of $O$ onto $\ell(AB)$, and write
$a_1=MA$, and $a_2=MB$.
Clearly,
$$ OA^2+OB^2 = a_1^2 +y^2+a_2^2 +y^2 \geq \frac{a^2}{2} + 2 y^2, $$
as desired.
\end{proof}

\paragraph{Proof of Theorem~\ref{T3}.}
The proof follows the same line of argument as the proof of Theorem~\ref{T1}. 
Let $p \in V(P)$ be an arbitrary vertex of $P$, and let 
$Z(p)= \sum_{q \in V(P)} |pq|^2$ 
be the sum of squared distances from $p$ to the other vertices. 
The sum of squared pairwise distances $Z$ satisfies 
$ 2Z= \sum_{p \in V(P)} Z(p)$. 
By Lemma~\ref{L1}, for any $i \in [n]$, we
have 
$$ |p A_i|^2 + |p A_{i+1}|^2 \geq \frac{|A_i A_{i+1}|^2}{2}. $$
By summing over $i \in [n]$, we get
$$ 2Z(p) \geq \frac{1}{2} \sum_{i=1}^n |A_i A_{i+1}|^2. $$
The Cauchy-Schwarz inequality yields
$$ \sum_{i=1}^n |A_i A_{i+1}|^2 \geq 
\frac{1}{n} \left(\sum_{i=1}^n |A_i A_{i+1}|\right)^2 = \frac{1}{n} . $$
Hence $4Z(p) \geq \frac{1}{n}$ for any $p \in V(P)$. 
Summing up this inequality over all $p \in V(P)$ yields
$$ 8Z= 4\sum_{p \in V(P)} Z(p) \geq \sum_{i=1}^n \frac{1}{n} =1, $$
or $Z \geq1/8$, as required. 
\qed

\paragraph{Remark.}
Interestingly enough, besides the construction mentioned in the
Introduction (with the odd vertices near $(0,0)$
and the even vertices near $(\frac{1}{n},0)$ or $(\frac{1}{n-1},0)$ 
depending on whether $n$ is even or odd)
there is yet another construction for which the sum of the squares of
the distances is at most $1/4$ in the limit. For odd $n$, consider $n$
points evenly distributed on a circle of radius $r=1/(2n \cos \frac{\pi}{2n})$, 
and labeled from $1$ to $n$, say in clockwise order. 
The polygon $P$ is the thrackle which connects the point labeled $i$
with the point labeled $i+ \frac{n-1}{2}$ (as usually the indexes are
taken modulo $n$). It is easy to verify that $P$ has unit perimeter
(see also Theorem~\ref{T7}).  Write $Z= \sum_{i<j} |A_i A_j|^2$. 
We have
$$ Z=n \sum_{i=1}^{(n-1)/2} 4r^2 \sin^2 \frac{i \pi}{n}=
4nr^2 \left( \sum_{i=1}^{(n-1)/2} \sin^2 \frac{i \pi}{n} \right). $$
Setting $k=\frac{n-1}{2}$ and $\alpha=\frac{\pi}{n}$ in the 
trigonometric identity~\cite[p.~64]{SCY79}
$$ \sum_{i=1}^{k} \sin^2 [i \alpha]= \frac{k+1}{2} -
\frac{\sin [(k+1) \alpha] \cdot \cos [k \alpha]}{2 \sin \alpha} $$
yields
$$ \sum_{i=1}^{(n-1)/2} \sin^2 \frac{i \pi}{n} = \frac{n+1}{4} -
\frac{1}{4}=\frac{n}{4} \ \Longrightarrow \ 
Z= 4nr^2 \cdot \frac{n}{4} = \frac{n^2}{4n^2} \cdot \frac{1}{\cos^2 \frac{\pi}{2n}}= 
\frac {1}{4} \cdot \frac{1}{\cos^2\frac{\pi}{2n}} \xrightarrow[n \to \infty]{} 
\frac {1}{4} . $$
For even $n$, duplicate one point in the construction above, and
obtain a similar estimate.

\paragraph{Proof of Theorem~\ref{T4}.}
By taking $f(x)=x^2$, it follows from Theorem~\ref{T5}
that $T(n) \leq \frac{1}{4} \left \lfloor \frac{n}{2} \right \rfloor
\left \lceil \frac{n}{2} \right \rceil $, as required.
From the other direction, we clearly have
$T(n) \geq T_c(n) =
\frac{1}{4} \left \lfloor \frac{n}{2} \right \rfloor \left \lceil \frac{n}{2}
\right \rceil $, and the proof of Theorem~\ref{T4} is complete.
\qed

\section{Odd polygons: proofs of Theorems~\ref{T6} and~\ref{T7}} 
\label{sec:T67}

\paragraph{Proof of Theorem~\ref{T6}.} 
For $n=3$ it is easily seen that the extremal polygon is 
an equilateral triangle of side $\sqrt{3}$, so let $n \geq 5$. 
Put $a=a(n)=\sqrt{1+8(n-2)^2}$ and let $H(n)$ be the right hand side
of~\eqref{E16}.  Then $H(n)$ can be also written as
\begin{equation} \label{E25}
H(n)=\frac{[(a+1)^2-4]^{1/2} (a+3)}{4(n-2)}. 
\end{equation}

Clearly, we have $a \geq 2\sqrt{2}(n-2) \geq 2\sqrt{2} \geq 3/2$,
hence  $(a+1)^2-4=a^2+2a-3 \geq a^2$, and consequently 
$[(a+1)^2-4]^{1/2} \geq a$. We show that this implies the inequality
$H(n) \geq 2(n-2)+\frac{3\sqrt{2}}{2}$:
\begin{equation} \label{E26}
H(n) \geq \frac{a(a+3)}{4(n-2)} \geq
\frac{2\sqrt{2}(n-2)}{4(n-2)} \left(2\sqrt{2}(n-2)+3 \right)=
\frac{2\sqrt{2}(n-2)+3}{\sqrt{2}} = 2(n-2)+\frac{3\sqrt{2}}{2}. 
\end{equation}

Let $o=(0,0)$ be the center of $\Omega$ and $X$ denote
the horizontal diameter of $\Omega$. Let $P$ be an extremal (limit)
polygon. Note that $P$ may have overlapping edges but is otherwise 
non-crossing. We will show that $P$ is unique and $\per(P) = H(n)$.
We start with the upper bound $\per(P) \leq H(n)$.
Let $BC$  be a longest side of $P$ of length $|BC|=z \leq 2$. 
We can assume that $BC$ is horizontal. Label each side $A_i A_{i+1}$ by $1$ or $0$ 
depending on whether it goes from left to right, or from right to left
in the $x$-direction (vertical sides are labeled arbitrarily). Since
$n$ is odd, we can find two consecutive sides of $P$ with the same label, say
$1$: they form a (weakly) $x$-monotone path, $\sigma$, of two
edges. By relabeling the vertices (if necessary), 
we can assume that this path consists of the edges $A_1 A_2$ and $A_2
A_3$: $\sigma= A_1 A_2 A_3$. Let $L_1=|\sigma|=|A_1 A_2|+ |A_2 A_3|$,
and let $L_2$ be the total edge length of the other $n-2$ edges, so
that $\per(P)=L_1 +L_2 \leq |\sigma| + (n-2)z$.  

Since $z \leq 2$ we can write $z=2 \sin \alpha$, for some $\alpha \in
[0,\pi/2]$. We first note that if $z \leq \sqrt3$ the upper bound
$\per(P) \leq H(n)$ follows immediately.
Indeed: (i) for $n=5$, $\per(P) \leq 5 \cdot \sqrt3 =8.66\ldots \leq
H(5)=8.97\ldots$ and we are done; 
(ii) for $n=7$, $\per(P) \leq 7 \cdot \sqrt3 =12.12\ldots \leq
H(7)=12.92\ldots$ and we are done; 
(iii) for $n \geq 9$, by \eqref{E26}, 
$\per(P) \leq n\sqrt3 \leq 2(n-2)+ \frac{3\sqrt{2}}{2} \leq H(n)$, 
and we are also done.
Therefore we can assume that $z \geq \sqrt3 =2\sqrt{3}/2$, hence 
$z=2 \sin \alpha$, for some $\alpha \in [\frac{\pi}{3},\frac{\pi}{2}]$. 

\begin{lemma} \label{L5} 
If $z=2 \sin \alpha$, for some $\alpha \in [\frac{\pi}{3},\frac{\pi}{2}]$, 
then $L_1 \leq 4 \cos \frac{\alpha}{2}$.
\end{lemma} 
\begin{proof} 
Since the path $\sigma=A_1 A_2 A_3$ is $x$-monotone and the polygon
$P$ is non-crossing, vertical rays from interior points of $BC$ meet
$\sigma$ on the same side of $BC$, if at all.  
Assume without loss of generality that $\sigma$ lies above $BC$ in
this sense; see Fig.~\ref{f2}(left and center) for two examples.
We may also assume that $BC$ lies below $o$ (\ie, $y(B) \leq 0$), since if
there is a counterexample to the lemma with $BC$ above $o$ then there
is also one with $BC$ below $o$: translate $BC$ down by $2 y(B)$ 
(to a parallel position below $o$); observe that $BC$ is 
still a horizontal segment of length $z$ contained in $\Omega$,
and $\sigma$ lies above $BC$. 
\begin{figure} [htb]
\centerline{\epsfxsize=6.2in \epsffile{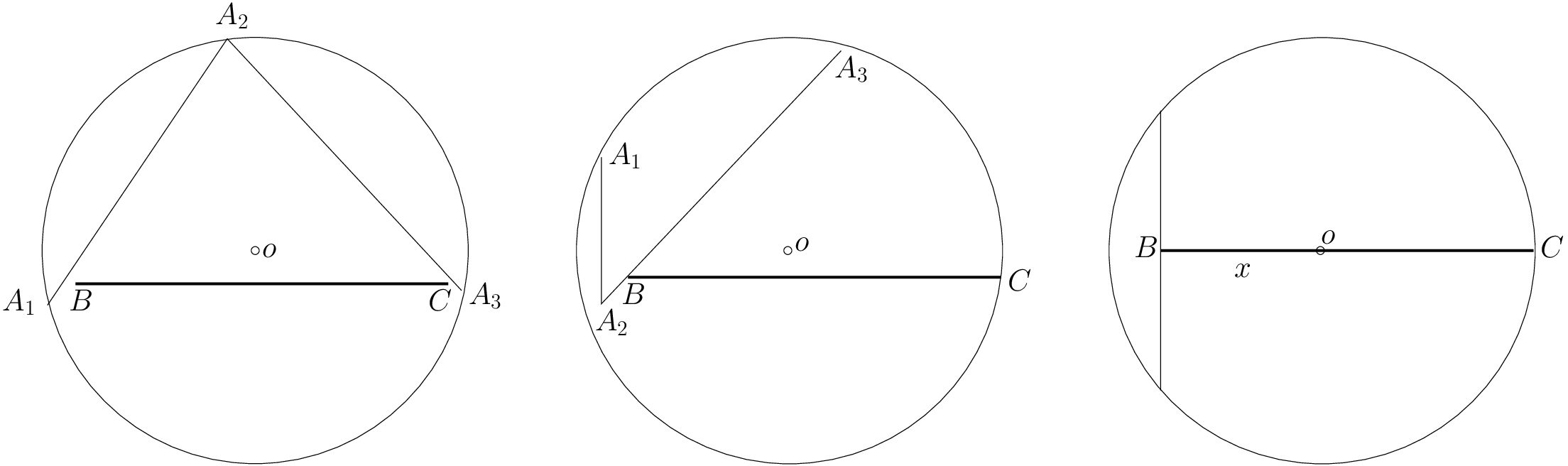}}
\caption{For a fixed length $z$, $L_1$ is maximum when $BC$ is a
chord of $\Omega$.}  
\label{f2}
\end{figure}

Consider for a moment the case when $BC$ is a right sub-segment of $X$ 
as in Fig.~\ref{f2}(right). Let $v$ be the length of the vertical chord 
incident to $B$. We have 
$$ v=2\sqrt{1-x^2}=2\sqrt{1-(z-1)^2}=2\sqrt{1-(2 \sin \alpha-1)^2}=
4\sqrt{\sin \alpha - \sin^2 \alpha}.
$$

We next verify that for $\alpha \in [\frac{\pi}{3},\frac{\pi}{2}]$ we have
\begin{equation} \label{E27}
z+v< 4 \cos \frac{\alpha}{2}, 
\end{equation}
or equivalently,
\begin{equation} \label{E28}
\sin \alpha + 2\sqrt{\sin \alpha - \sin^2 \alpha} < 2 \cos \frac{\alpha}{2}.
\end{equation}
Observe that both $f(\alpha)= \sin \alpha + 2\sqrt{\sin \alpha - \sin^2 \alpha}$ 
and $g(\alpha)= 2 \cos \frac{\alpha}{2}$ are decreasing functions on 
the interval $[\frac{\pi}{3},\frac{\pi}{2}]$. 
Partition the interval $[\frac{\pi}{3},\frac{\pi}{2}]$ into two
interior-disjoint intervals: 
$$ \left[\frac{\pi}{3},\frac{\pi}{2}\right]= 
[\alpha_1,\beta_1] \cup [\alpha_2,\beta_2], $$
where $\alpha_1= \pi/3$, $\beta_1=\alpha_2= 5\pi/12$, and $\beta_2=\pi/2$.
It is enough to check that $ f(\alpha_i) < g(\beta_i)$, for $i=1,2$:
$f(\alpha_1)=1.547\ldots < g(\beta_1)=1.586\ldots$,
and $f(\alpha_2)=1.328\ldots < g(\beta_2)=1.414\ldots$.
We have thereby verified \eqref{E28}. 

According to whether the slopes of $A_1 A_2$ and $A_2 A_3$ 
are $\geq 0$ or $\leq 0$, we say that the path $\sigma=A_1 A_2 A_3$ is of
type $++$, $+-$, $-+$, or $--$ (zero slopes are labeled arbitrarily). 
For example, $\sigma$ in Fig.~\ref{f2}(left) is of
type $+-$, while $\sigma$ in Fig.~\ref{f2}(center) is of type $-+$. 
We distinguish three cases:

\smallskip
{\em Case 1:} The path $A_1 A_2 A_3$ is of type $-+$, as in
Fig.~\ref{f2}(center) or in Fig.~\ref{f3}(left). 
If $x(A_2) \leq x(B)$
(the other case when $x(A_2) \geq x(C)$ is symmetric),
then $|A_1 A_2| \leq v$ and $|A_2 A_3| \leq z$, and inequality
\eqref{E27} concludes the proof. 
Assume now that $x(B) < x(A_2) < x(C)$, thus $A_2$ lies above $BC$.
The length of the chord extending $BC$ is $2 \sin \alpha_1$, for some
$\alpha_1$, where $\alpha \leq \alpha_1 \leq \pi/2$. 
\begin{figure} [htb]
\centerline{\epsfxsize=4.2in \epsffile{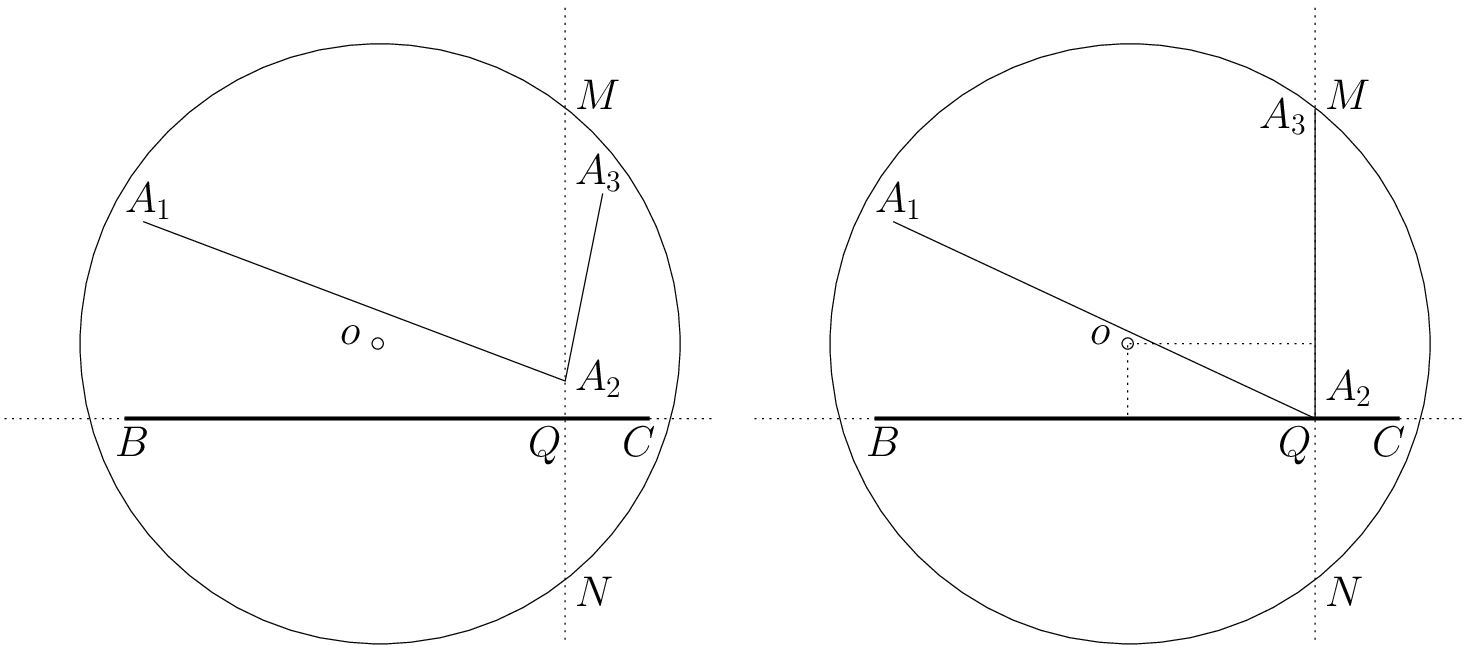}}
\caption{Case 1 in the proof of Lemma~\ref{L5}.}  
\label{f3}
\end{figure}
By symmetry we can assume that $x(o) \leq x(A_2)$, as in Fig.~\ref{f3}. 
Let $MN$ be the vertical chord through $A_2$, and $Q =BC \cap MN$.  
Assume that $MN$ subtends a central angle $2\beta$, for some 
$\beta \in [0,\pi/2]$. 
For fixed $\alpha$ and $\beta$, the length $|\sigma|$ is increased when 
$A_2$ is pushed down to $Q$, and $A_3$ is moved to $M$, \ie, 
$A_2=Q$ and $A_3=M$. 
The width and height of the rectangle with opposite vertices $o$ and
$Q$ are $\cos \beta$ and $\cos \alpha_1 \leq \cos \alpha$, respectively.
It follows that $|A_2 A_3| \leq \cos \alpha + \sin \beta$.
By the triangle inequality (used twice) we have
$$ |A_1 A_2| \leq |A_1 o| + |oQ| \leq |A_1 o| + \cos \alpha + \cos \beta \leq
1+ \cos \alpha + \cos \beta. $$
Recall the standard trigonometric inequality 
$\cos \beta + \sin \beta \leq \sqrt2$.  
Putting these together we obtain
\begin {align} \label{E1}
|\sigma| &=|A_1 A_2| + |A_2 A_3| \leq (1+ \cos \alpha + \cos \beta) +
(\cos \alpha + \sin \beta) \\ \nonumber
&= 1 + 2\cos \alpha +(\cos \beta + \sin \beta) 
\leq 1 + \sqrt2 + 2\cos \alpha. 
\end{align}
Since $\cos \alpha= 2 \cos^2 \frac{\alpha}{2} -1$, 
it remains to verify that 
$1 + \sqrt2 + 4\cos^2 \frac{\alpha}{2} -2 \leq 4 \cos \frac{\alpha}{2}$.
Make the substitution $t=\cos \frac{\alpha}{2}$; then 
$t \in [\cos \frac{\pi}{4}, \cos \frac{\pi}{3}]=
[\frac{\sqrt2}{2},\frac{\sqrt3}{2}]$,  
and we need to verify that 
$$ 1 + \sqrt2 + 4t^2 -2 \leq 4t, $$
or equivalently,
\begin{equation} \label{E2}
(2t-1)^2 \leq 2 -\sqrt2, \ \ {\rm for} \ \ t \in [\sqrt2/2,\sqrt3/2].
\end{equation}
It is easy to see that for the above range of $t$ we have
$$ (2t-1)^2 \leq (2 \sqrt3/2 -1)^2 = (\sqrt3 -1)^2 < 2 -\sqrt2, 
$$ 
as required.

\smallskip
{\em Case 2:} The path $A_1 A_2 A_3$ is of type $+-$, as in
Fig.~\ref{f2}(left). If $x(A_2) \leq x(B)$
(the other case when $x(A_2) \geq x(C)$ is symmetric),
then $|A_1 A_2| \leq v$ and $|A_2 A_3| \leq z$, and inequality
\eqref{E27} concludes the proof.  
Otherwise, $x(B) <x(A_2) < x(B)$, and we 
replace $A_1 A_2 A_3$ by a longer path as follows. 
If the extension of $A_2 A_1$ (beyond $A_1$) intersects $BC$, move 
$A_1$ to $B$; similarly if the extension of $A_2 A_3$ (beyond $A_3$)
intersects $BC$, move $A_3$ to $C$. Now $\sigma$ is still $x$-monotone 
and the extensions of $A_2 A_1$ and  $A_2 A_3$ meet $\partial \Omega$ 
without intersecting the interior of $BC$. Move $A_1$ and $A_3$ to
these intersection points on the circle $\partial \Omega$. 
Now move $A_2$ upward to the circle $\partial \Omega$ while increasing $|\sigma|$. 
We now have a $x$-monotone path $A_1 A_2 A_3$ of type $+-$ and above
$BC$ with all three points $A_1,A_2,A_3$ on the circle. 

Move $BC$ downward until it hits $\partial \Omega$,
and then rotate it around the endpoint on $\partial \Omega$; 
now $BC$ is a chord of length $z$ (subtending an angle of $2\alpha$
from the center $o$) below the chord $A_1 A_3$.  
Since $A_1$ and $A_3$ lie on the lower half-circle of $\partial
\Omega$, the chord $A_1 A_3$ subtends a central angle 
$2 \alpha_1$, where $\alpha \leq \alpha_1 \leq \pi/2$. 
For a fixed $\alpha$, $L_1=|\sigma|$ is maximized when the triangle
$\Delta{A_1 A_2 A_3}$ is isosceles with $|A_2 A_1|=|A_2 A_3|$,
$|A_1 A_3|= z = 2\sin \alpha$, and $o$ is in the interior of the triangle.
Indeed, $ L_1 = 2 (\sin \beta + \sin \gamma)$, 
where $2(\alpha_1+ \beta+ \gamma)=2 \pi$, thus
$$ L_1 =2 (\sin \beta + \sin \gamma) = 
4 \sin \frac{\beta+\gamma}{2} \cos\frac{\beta-\gamma}{2}
\leq 4 \sin \frac{\beta+\gamma}{2} = 4 \cos \frac{\alpha_1}{2}
\leq 4 \cos \frac{\alpha}{2}. $$
Observe that in the (unique) maximizing position,
$\beta=\gamma=\frac{\pi-\alpha}{2}$ and $o$ is in the interior of 
the triangle $\Delta{A_1 A_2 A_3}$.

\smallskip
{\em Case 3:} The path $\sigma=A_1 A_2 A_3$ is of type $++$ 
(or symmetrically, $--$). Write $\delta=\angle{A_1 A_2 A_3}$. 
Since $\sigma$ is also $x$-monotone, 
we have $\delta \geq \pi/2$. By the Cosine Theorem,
$$ |A_1 A_3|^2 = |A_1 A_2|^2 + |A_2 A_3|^2 -2 |A_1 A_2| |A_2 A_3| \cos \delta 
\geq |A_1 A_2|^2 + |A_2 A_3|^2. $$
Obviously, 
$ |A_1 A_2|^2 + |A_2 A_3|^2 \geq (|A_1 A_2|+|A_2 A_3|)^2 /2$, hence 
$$ |\sigma|= |A_1 A_2|+|A_2 A_3| \leq |A_1 A_3| \sqrt2 \leq 2 \sqrt2 
\leq 4 \cos \frac{\alpha}{2}, \ \ {\rm for} \ \ \alpha \in 
\left[\frac{\pi}{3},\frac{\pi}{2}\right], $$
as required.

\medskip
This concludes our case analysis. If any increase had occurred, a simple polygon
whose perimeter is strictly larger than that of $P$ could be
constructed by taking the new path $A_1 A_2 A_3$ and then going back
and forth near $A_2 A_3$ with the remaining $n-2$ edges.  
However, this would contradict the fact that $P$ were an extremal polygon. 
Observe that $L_1 \leq 4 \cos \frac{\alpha}{2}$ can hold with equality
only in Case 2. This is clear for Case 1. Equality in Case 3 requires
$\alpha=\pi/2$, thus $z=2$; moreover, it requires $|\sigma|=2\sqrt2$ and 
$|A_1A_2|=|A_2A_3|=\sqrt2$ with one of the two segments vertical and the
other horizontal; however, the length of the vertical segment cannot
exceed $1$, which is a contradiction. 
We have thus shown that for a fixed length $z$,  $L_1$
is maximized when $BC$ (of length $z$) is a chord of $\Omega$, $A_1=B$, $A_3=C$, 
and $|A_2 A_1|=|A_2 A_3|$ with $A_1,A_2,A_3$ on the circle and $o$ in
the interior of $\Delta{A_1 A_2 A_3}$. 
\end{proof} 

\medskip
Since $z=2 \sin \alpha$ is the length of a longest side, by
Lemma~\ref{L5} we get 
\begin{equation} \label{E13}
F(n) \leq L_1 + (n-2)z \leq 4 \cos \frac{\alpha}{2} + 2(n-2) \sin \alpha.
\end{equation}
We are thus led to maximizing the following function of one variable
$\alpha \in [0,\pi/2]$:
$$ f(\alpha)= 4 \cos \frac{\alpha}{2} + 2(n-2) \sin \alpha. $$
The function $f(\cdot)$ is maximized at the root of the derivative:
$$ f'(\alpha)= -2 \sin \frac{\alpha}{2} + 2(n-2) \cos \alpha. $$
Making the substitution $x= \sin \frac{\alpha}{2}$, 
and using the trigonometric identity 
$\cos \alpha= 1 - 2 \sin^2 \frac{\alpha}{2}$, yields the quadratic
equation in $x$:
$$ -2x + 2(n-2) (1-2x^2)=0, \ {\rm or} $$
$$ 2(n-2)x^2 +x -(n-2) =0. $$
The solution (corresponding to $\alpha \in [0,\pi/2]$) is
\begin{equation} \label{E14}
x= \sin \frac{\alpha}{2}=
\frac{-1 + \sqrt{1+8(n-2)^2}}{4(n-2)}. 
\end{equation}

This implies 
$$ \cos \frac{\alpha}{2}= \sqrt{1- \sin^2 \frac{\alpha}{2}} =
\frac{ \sqrt{8(n-2)^2 -2 +2 \sqrt{1+8(n-2)^2}}}{4(n-2)}. $$
Consequently, $F(n)$ is bounded from above by 
the maximum value of $f(\cdot)$, namely
\begin{align} \label{E15}
F(n) &\leq 4 \cos \frac{\alpha}{2} + 2(n-2) \sin \alpha =
4 \cos \frac{\alpha}{2} \left( (n-2) \sin \frac{\alpha}{2} +1 \right)
\nonumber \\
&= \frac{\sqrt{8(n-2)^2 -2 +2 \sqrt{1+8(n-2)^2}} \cdot 
\left(\sqrt{1+8(n-2)^2}+3\right)} {4(n-2)} =H(n). 
\end{align}

To see that this upper bound is tight construct a simple polygon as
follows. Let $A_1 A_3$ be a horizontal chord of length $z=2 \sin \alpha$,
below the center $o$, with $\alpha$ set according to \eqref{E14}. 
Let $A_2$ be the intersection point above $A_1 A_3$ of the vertical
bisector of $A_1 A_3$ with the unit circle $\partial \Omega$. The
remaining $n-2$ sides of the polygon go back and forth near the
horizontal chord $A_1 A_3$. Thus formula \eqref{E16} holds for every odd $n \geq 3$. 
This concludes the proof of Theorem~\ref{T6}. 
\qed

\paragraph{Proof of Theorem~\ref{T7}.} 
We start with the upper bound on $G(n)$. 
Consider the set of $n$-gons contained in $\Omega$, where each such
$n$-gon is given by the $n$-tuple of its vertex coordinates. 
Note that this forms a compact set, hence there exists an 
extremal polygon $P=A_1 \ldots A_n$, where $(A_{n+1}=A_1)$ which
attains the maximum perimeter. Observe two properties of $P$ that we
justify below: 
\begin{itemize}
\item Each vertex of $P$ lies on $\partial \Omega$. 
\item All sides of $P$ have equal length $<2$. 
\end{itemize}

First, assuming that $A_i$ lies in the interior of $\Omega$,
$\per(P)$ could be increased by moving $A_i$ orthogonally away from 
$A_{i-1} A_{i+1}$, or away from $A_{i-1}$ in case $A_{i-1} = A_{i+1}$. 
This would contradict the maximality of $P$, hence all 
vertices of $P$ lie on the circle.
Second, assume now that $A_{i-1}, A_i, A_{i+1}$ lie on the circle 
$\partial \Omega$, and $|A_{i-1} A_i| \neq |A_i A_{i+1}|$. Then
$\per(P)$ could be increased by moving $A_i$ on the circle and further
from $A_{i-1} A_{i+1}$ (to the midpoint of the arc). 
This again would contradict the maximality of $P$, hence 
all sides of $P$ are equal. Since $n$ is odd, it is obvious that the
common edge length is strictly smaller than $2$, since otherwise
$A_{n+1}$ cannot coincide with $A_1$. 

Having established the two properties above, we can now easily 
obtain an upper bound on the perimeter of $P$. 
Let $o$ be the center of $\Omega$. 
For each $i \in [n]$, label the side $A_i A_{i+1}$ by $+$ or $-$ 
depending on whether the center $o$ lies on the right of 
$\overrightarrow{A_i A_{i+1}}$ or on the left of $\overrightarrow{A_i A_{i+1}}$. 
This labeling encodes the winding of the edges of $P$ around the center $o$. 
Let $[n]=\Gamma_+ \cup \Gamma_-$ be the corresponding partition of $[n]$
determined by a positive or, respectively, negative labeling of $A_i
A_{i+1}$. Write $k=|\Gamma_+|$, and $l=|\Gamma_-|$, so $k+l=n$. 

We can assume that $l=0$; indeed if both $k>0$ and $l>0$, then there
exist two consecutive sides, $A_{i-1} A_i$ and $A_i A_{i+1}$, 
one with a positive label and one with a negative label. This implies
that $A_{i-1} = A_{i+1}$, hence $\per(P)$ could be increased 
(recall that the side length is smaller than $2$) by moving
$A_i$ to the point diametrically opposite to $A_{i-1}$ (and $A_{i+1}$), 
a contradiction. Hence $l=0$, $\Gamma_-=\emptyset$, and  $\Gamma= \Gamma_+=[n]$. 
For each $i \in [n]$, let $\angle{A_i  o A_{i+1}}=2\alpha$, 
where $0 < \alpha <\frac{\pi}{2}$. 
Since $P$ is a closed polygonal chain, $2n \alpha = m \pi$ 
for some positive integer $m$, $ 1 \leq m \leq n-1$. 
Consequently, the perimeter of $P$ is
\begin{equation} \label{E23}
\per(A_1 \ldots A_n)= 2n \sin \alpha = 2n \sin \frac{m \pi}{2n} 
\leq 2n \sin \frac{(n-1)\pi}{2n}= 2n \cos \frac{\pi}{2n}, 
\end{equation} 
as claimed. 

It remains to show that this bound can be attained. 
Consider $n$ points evenly distributed on the unit circle,
and labeled from $1$ to $n$, say in clockwise order. The polygon we need is the 
thrackle which connects the point labeled $i$ with the point labeled $i+
\frac{n-1}{2}$ (as usually the indexes are taken modulo $n$). 
It is easy to verify that its perimeter is given by the upper bound in
\eqref{E23}, and this concludes the proof of Theorem~\ref{T7}.
\qed

\medskip
\noindent {\bf Remarks.} For instance, $F(3)=3 \sqrt{3}=5.19\ldots$ corresponds
to an equilateral triangle of side $\sqrt{3}$, and
$ F(5)= \sqrt{70 + 2 \sqrt{73}} \cdot (\sqrt{73}+3)/12= 8.9774\ldots$. 
The exact formula \eqref{E16} easily yields an approximation of the form: 
$$ F(n)= 2(n-2) + 2 \sqrt{2} + O\left(\frac{1}{n}\right). $$

Note that the sum of the first two terms in this formula,
$ 2(n-2) + 2 \sqrt{2}$, gives (in the limit) the perimeter of a simple polygon 
whose first two sides have length $\sqrt{2}$ each, and whose remaining $n-2$ sides
go back and forth near a diameter of the unit disk. Thus the perimeter
of the extremal polygon in Theorem~\ref{T6} exceeds the perimeter of
the polygon described above only by a term that tends to zero with $n$.

It is interesting to observe that (for odd $n$) in contrast to $F(n)$,
$G(n)$ does get arbitrarily close to $2n$, as $n$ tends to infinity; that is,
$G(n)=2n-o(1)$. Indeed, the series expansion of $\cos x$ around $x=0$, 
$\cos x = 1-\frac{x^2}{2} + \ldots$ gives\footnote{%
A closed formula approximation avoiding the infinite sum is easily obtainable.}
$$ G(n)= 2n \cos \frac{\pi}{2n} = 
2n \left( 1 - \frac{\pi^2}{8n^2} + \ldots \right)= 
2n- \frac{\pi^2}{4n} + \ldots = 2n-o(1) . $$

\paragraph{Acknowledgement.}
The author is grateful to an anonymous reviewer for pointing out our
earlier proof of Theorem~\ref{T6} to be in error and for suggesting a
shorter proof of Theorem~\ref{T7}.

\end{document}